\let\footnote=\endnote
\newcommand{\trace}{{\mbox{\textrm{Tr}}}}
\newcommand{\rank}{{\mbox{\textrm{Rank}}}}
\newcommand{\bfalpha}{{\mbox{\boldmath $\alpha$}}}
\newcommand{\bfxi}{{\mbox{\boldmath $\xi$}}}
\newcommand{\bfbeta}{{\mbox{\boldmath $\beta$}}}
\newcommand{\st}{{\rm s.t.}}
\newcommand{\Prob}{{\rm Prob}}
\newcommand{\by}{\mathbf{y}}
\newcommand{\bx}{\mathbf{x}}
\newcommand{\bH}{\mathbf{H}}
\newcommand{\bX}{\mathbf{X}}
\newcommand{\bw}{\mathbf{w}}
\newcommand{\bh}{\mathbf{h}}
\newcommand{\bhX}{\widehat{\mathbf{X}}}
\newcommand{\cM}{\mathcal{M}}
\newcommand{\cN}{\mathcal{N}}
\begin{document}


\RUNAUTHOR{Z. Xu and M. Hong}

\RUNTITLE{Semidefinite Approximation for Two MBQCQPs}

\TITLE{Semidefinite Relaxation for Two Mixed Binary Quadratically Constrained Quadratic Programs: Algorithms and Approximation Bounds}

\ARTICLEAUTHORS{%
\AUTHOR{Zi Xu}
\AFF{Department of Mathematics, College of Sciences, Shanghai University, Shanghai, 200444, P. R. China, \EMAIL{xuzi@shu.edu.cn.}} 
\AUTHOR{Mingyi Hong}
\AFF{Department of Electrical and Computer Engineering, University of Minnesota, Minneapolis, MN 55455, \EMAIL{mhong@umn.edu.}}
} 

\ABSTRACT{%
This paper develops new semidefinite programming (SDP) relaxation techniques for two classes of mixed binary quadratically constrained quadratic programs (MBQCQP) and analyzes their
approximation performance. The first class of problem finds two minimum norm vectors in $N$-dimensional real or complex Euclidean space, such that $M$ out of $2M$ concave quadratic functions are satisfied. By employing a special randomized rounding procedure, we show that the ratio between the norm of the optimal solution of this model and its SDP relaxation is upper bounded by $\frac{54M^2}{\pi}$ in the real case and by $\frac{24M}{\sqrt{\pi}}$ in the complex case. The second class of problem finds a series of minimum norm vectors
subject to a set of quadratic constraints and a cardinality constraint with both binary and continuous variables. We show that in this case the approximation ratio is also bounded and independent of problem dimension for both the real and the complex cases.
}%


\KEYWORDS{nonconvex quadratically constrained quadratic programming; semidefinite program relaxation; approximation bound; NP-hard} 

\maketitle

%


\section{Introduction}
In this paper, we study  two classes of mixed binary nonconvex quadratically constrained quadratic programming (MBQCQP) problems, where the objective functions are quadratic in the continuous variables and the constraints contain both continuous and binary variables. These two classes of optimization problems are difficult as they are nonconvex even with the binary variables being fixed.  The focus of our study is to design efficient semidefinite programming (SDP) based algorithms for both problems and to analyze their approximation bounds.

{\bf The first model.}
Consider the following MBQCQP problem:
\begin{align}
\min_{\bw_1, \bw_2\in \mathbb{F}^{N}, \mbox{\footnotesize\boldmath$\beta$}}&\quad\|\bw_1\|^2+\|\bw_2\|^2\nonumber\\
{\st }&\quad \bw_1^H\bH_i\bw_1\ge \beta_i,\ i=1\cdots,M\tag{P1}\label{problem},\nonumber\\
&\quad \bw_2^H\bH_i\bw_2\ge 1-\beta_i,\ i=1\cdots,M,\nonumber\\
&\quad \beta_i\in\{0,1\},\ i=1,\cdots,M\nonumber,
\end{align}
where $\mathbb{F}$ is either the field of real numbers $\mathbb{R}$ or the field of complex numbers $\mathbb{C}$; $\bfbeta:=(\beta_1, \cdots, \beta_M)^T$; $\bH_i:=\bh_i\bh_i^H$ $(i=1,\cdots, M)$ and $\bh_i$'s are $N$ dimensional real or complex vectors; $\|\cdot\|$ denotes the Euclidean norm in $\mathbb{F}^N$ and $M>0$ is a given integer.
Throughout, we use the superscript $H$ to denote the complex Hermitian transpose when $\mathbb{F}=\mathbb{C}$, and use it to denote transpose if $\mathbb{F}=\mathbb{R}$.  Notice that the problem \eqref{problem} can be easily solved either when $N=1$ or $M=1$, by solving a maximum eigenvalue problem. Hence, we shall assume that $N\ge 2$ and $M\ge 2$ in the rest of the paper.
Let $v^{\rm P1}$ denote its optimal objective value.
We note that problem \eqref{problem} is in general NP-hard. The reason is that even if we fix all the binary variables, the problem is eqivalent to two subproblems, which are both NP-hard; see \cite[Section
2]{Luo07approximationbounds}.

Our interest in problem \eqref{problem} is motivated by its
application in telecommunications. For example,
consider a cellular network where $M$ users, each equipped with a single
receive antenna, are served by a base station (BS) with $N$ transmit antennas.
Let $\bar{\mathbf{h}}_i\in\mathbb{C}^{N\times 1}$ denote the complex
channel coefficient between the BS and user $i$, and let $n_i$
denote the thermal noise power at the receiver of user $i$. Let
$\cM=\{1,\cdots,M\}$ denote the set of all users. Assume that there are two time slots, and for each slot $q=\{1,2\}$ a linear transmit beam $\mathbf{w}_q\in\mathbb{C}^{N\times 1}$ is
used by the BS to transmit a common message to the users.
Using these notations, the signal to noise ratio (SNR) at the receiver of
each user $i\in\mathcal{M}$ at a give slot can be expressed as ${\rm
SNR}_{i}\triangleq\frac{|\bw_q^H\bar{\bh}_i|^2}{n_i}$.
In order to successfully decode the transmitted message, typically a
quality of service (QoS) requirement in the form of
$\frac{|\bw_q^H\bar{\bh}_i|^2}{n_i}\ge\gamma_i$ is imposed by each
user $i\in\cM$, where $\gamma_i$ is a predetermined QoS threshold.
Let us define $\bh_i:=\frac{\bar{\bh}_i}{\sqrt{n_i
\gamma_i}}$ as user $i$'s normalized channel.

When the number of users in the network is large, proper user scheduling algorithm is needed in order
to guarantee the QoS for all the users. More specifically, the objective of the design is threefold: 1)
Properly assign the users into different time slots; 2) Multicast the desired signal to {\it each} time slot; 3) Minimize the total transmit power across all slots. In the simplest case where there are only two slots available, the problem can be formulated as the MBQCQP probem \eqref{problem} by using $\beta_i\in\{0,1\}$ and $1-\beta_i\in\{0,1\}$ to represent whether user $i$ is scheduled in slot $1$ or slot $2$. This problem is called joint user grouping and physical layer transmit beamforming.

In the special case where there is only a single slot available, the physical layer multicast problem tries to minimize the total transmit power while satisfying {\it all} the users' QoS constraint. Mathematically it is equivalent to problem \eqref{problem} with $\beta_i=1$ for all $i$.
The resulting problem is a continuous QCQP problem, whose SDP relaxation bounds have been extensively studied; see e.g., \cite{Luo07approximationbounds}. When applying SDP relaxation to solve the related problem, the first step is to reformulate the problem by introducing a rank-1 matrix $\bX=\bw\bw^H$. After
dropping the nonconvex rank-1 constraint on $\bX$, the relaxed
problem becomes an SDP, whose optimal solution ${\bar \bX}$ can be
efficiently computed. A randomization procedure then follows which
converts ${\bar \bX}$ to a feasible solution. It
has been shown in \cite{Luo07approximationbounds} that such SDP
relaxation scheme generates high-quality solutions, and the theoretical ratio between its optimal solution and the SDP relaxation has been shown
to be upper bounded by $27M^2/\pi$ \cite{Luo07approximationbounds}.
However, for more general case \eqref{problem} with both binary and continuous variables,
there is no known performance guarantees for the performance of SDP
relaxation techniques.

{\bf The second model.} Another interesting case of the MBQCQP
problem is the generalization for the first model. In this more general model, we consider an alternative formulation which can deal with the case with multiple time slots. Consider the same system model as described in the first model, except that we have a total of $Q$ time slots available for scheduling. Let
$\bfbeta:=[\bfbeta^{(1)}, \cdots,\bfbeta^{(Q)}]$, where
$\bfbeta^{(q)}\in\{0,1\}^{M}$. Let $\bw:=[(\bw_1)^H,\cdots,
(\bw_Q)^H]^H$, where $\bw_q$ represents the beamformer used in the $q$th
time slot. Consider the following problem
\begin{align}
\min_{\bw_q, \mbox{\footnotesize\boldmath$\beta$}^{(q)}}&\quad\sum_{q=1}^{Q}\|\bw_q\|^2\nonumber\\
{\st}&\quad \bw_q^H\bH_i\bw_q\ge \beta^{(q)}_i,\ i=1\cdots,M,\ q=1\cdots, Q\tag{P2}\label{problemQReformulate}\\
&\quad\sum_{q=1}^{Q}\beta_i^{(q)}\ge P_i, \ i=1,\cdots,M\nonumber\\
&\quad \beta^{(q)}_i\in\{0,1\},\ \ i=1,\cdots,M,\ q=1,\cdots
Q\nonumber.
\end{align}
Note that we have generalized the problem to the one that allows
each user $i$ to be assigned to {\it at least $P_i$ time slots}.
Note that $P_i\in[1,\cdots, Q]$ is a given integer that can
represent the users's service priorities. The higher the value of
$P_i$, the larger the number of slots will be reserved for user $i$.
 let $V^{\rm P2}$ denote its optimal objective value. Note that the above formulation, even for the $Q=2$ case, we have $M$ additional discrete integer variables compared with our previous formulation.
For this more general case, no approximation bounds for SDP relaxation are known either.

In the absence of the discrete constraints, there is an extensive literature on the quality bounds of SDP relaxation for solving nonconvex QCQP problems, in either a maximization or a minimization form
\cite{Luo07approximationbounds,Nemirovski99,Bental02,Beck, He2008}. In the seminal work by \cite{Goemans:1995}, the authors show that for the max-cut problem, which can be formulated as certain QCQP problem with discrete variables only, the ratio of the optimal value of SDP relaxation over that of the original problem is bounded below by $0.87856\ldots$. Other related results can be found in \cite{ye01_699, Frieze:1995}. Some alternative approaches to
MBQCQP have recently appeared in \cite{Billionnet2009, Burer2011, Saxena10,
Saxena11, Billionnet2012}. More detailed reviews of recent progress on related problems
can be found in the excellent surveys \cite{Burer2012, Hemmecke, Koppe}.

Although SDP relaxation technique has been quite successful in solving continuous QCQP, so far little is known about the effectiveness of applying it for MBQCQP, except for two recent results \cite{Hong2013, Xu2013}. In those two papers, the authors study MBQCQP models that differ substantially from those considered in this work. In \cite{Hong2013}, the authors considered a quadratic maximization problem with a {\it single} cardinality constraint on the binary variables, and those binary variables are used to {\it reduce the dimensionality of continuous variables}. In \cite{Xu2013}, the authors consider an MBQCQP problem where the objective is to find a minimum norm vector under some concave mixed binary quadratic constraints and a  single cardinality constraint on the binary variables. A special case of that problem is to
 find a minimum norm vector while satisfying $Q$ out of $M$ quadratic constraints. In this case the binary variables are used to {\it reduce the number of quadratic constraints}. In the current work, we consider a significantly harder problem where there are {\it multiple} cardinality constraints for the binary variables. As a result, the techniques and analysis developed in \cite{Hong2013, Xu2013} can not be applied directly. We note that if we set $M=1$, $P_1=q$ and $Q=m$, then problem \eqref{problemQReformulate} reduces to a special case of the minimization model considered in \cite{Xu2013} with $\epsilon=0$.

{\bf Our Contributions.} In Sections $2$, we develop novel SDP relaxation techniques for solving the models \eqref{problem} and \eqref{problemQReformulate}. The main idea is to first relax the binary variables to continuous variables and using the SDP relaxation for the rest of the continuous variables. Given an optimal solution of the relaxed problem, we devise new randomization procedures to generate approximate solutions for the original NP-hard MBQCQP problems. Moreover, we analyze the quality of such approximate solutions by
deriving  bounds on the approximation ratios between the optimal solution of
the two MBQCQP problems and those of their corresponding SDP relaxations for both the real and complex cases.

{\bf Notations.} For a symmetric matrix $\mathbf{X}$,
$\mathbf{X}\succeq 0$ signifies that $\mathbf{X}$ is positive
semi-definite. We use $\trace\left[\mathbf{X}\right]$ and $\bX[i,j]$
to denote the trace and the $(i,j)$th element of a matrix $\bX$, respectively.
For a vector $\bx$, we use $\|\bx\|$ to denote its Euclidean norm,
and use $\bx[i]$ to denote its $i$th element. For a real vector
$\by$, use $\by_{[m]}$ to denote its $m$th largest elements. For a
complex scalar $x$, its complex conjugate is denoted by $\bar{x}$.
Given a set $\mathcal{A}$, $|\mathcal{A}|$ denotes the number of
elements in set $\mathcal{A}$. Also, we use $\mathbb{R}^{N\times M}$ and
$\mathbb{C}^{N\times M}$ to denote the set of real and complex
$N\times M$ matrices, and use $\mathbb{S}^{N}$ and
$\mathbb{S}^{N}_{+}$ to denote the set of $N\times N$ hermitian and
hermitian positive semi-definite matrices, respectively.
Finally, we use the superscript $H$ and $T$ to denote the complex Hermitian transpose and transpose of a matrix or a vector respectively.

\section{Approximation Algorithm and Approximation Ratio Analysis}
\subsection{The Proposed Algorithm for \eqref{problem} and Its Approximation Ratio}\label{secTwoSlots}
We consider a relaxation of
the problem \eqref{problem}, expressed as follows
\begin{align}
\min_{\bX^{(1)}, \bX^{(2)}, \bfalpha}&\quad\trace[\bX^{(1)}]+\trace[\bX^{(2)}]\nonumber\\
{\st}&\quad\trace[\bH_{i}\bX^{(1)}]\ge \alpha_i, i=1,\cdots,M\tag{SDP1}\label{prblemSDR}\\
&\quad\trace[\bH_{i}\bX^{(2)}]\ge1-\alpha_i, i=1,\cdots,M\nonumber\\
&\quad 0\leq \alpha_i \leq 1, i=1,\cdots,M\nonumber\\
&\quad \bX^{(1)}\succeq 0, \bX^{(2)}\succeq 0,\nonumber
\end{align}
where we do SDP relaxation for the continuous variables and continuous relaxation for the binary variables.  Let $(\bX^{*(1)}, \bX^{*(2)}, \bfalpha^*)$ denote the optimal
solution to this SDP problem, and let $v^{\rm SDP1}$ denote
its optimal objective value.

In the following, we aim to generate a feasible solution  $(\bar\bx^{(1)}, \bar\bx^{(2)}, \bar\bfalpha^*)$ for
\eqref{problem} from  $(\bX^{*(1)}, \bX^{*(2)}, \bfalpha^*)$ , and evaluate the quality of
such solution. In particular, we would like to find a constant
$\mu\ge 1$ such that
$$\|\bar\bx^{(1)}\|^2+\|\bar\bx^{(2)}\|^2 \le \mu v^{\rm SDP1}.$$
By using the fact that such generated solution is feasible for
problem \eqref{problem}, we have $v^{\rm P1}\le \|\bar\bx^{(1)}\|^2+\|\bar\bx^{(2)}\|^2$, which further implies that the same $\mu$ is an
upper bound of the SDP relaxation performance, i.e.,
\begin{align}
v^{\rm P1}\le \mu v^{\rm SDP1}\label{defratio}.
\end{align}
The constant $\mu$ will be referred to as the {\it approximation
ratio}.

Once we obtain the solution $(\bX^{*(1)}, \bX^{*(2)}, \bfalpha^*)$, we first need to convert it into a
feasible solution for the problem \eqref{problem}. For such purpose, we propose a new randomization procedure; see Table \ref{tableRandomize}.

\begin{table}[htb]
\begin{center}
\vspace{-0.1cm} \caption{The Randomization Procedure}
\label{tableRandomize} {\small
\begin{tabular}{|l|}
\hline
\\
S0:~ Let $\bar\bfalpha^*_i=1$ if $\bfalpha^*_i\geq \frac{1}{2}$, otherwise let $\bar\bfalpha^*_i=0$.\\
\quad \quad Denote $\mathcal{I}=\{i \mid \bar\bfalpha^*_i=1, i=1, \cdots, M\}$.\\

S1: ~Sample $\bfxi^{(1)}\sim \cN_c(\bf{0}, \bX^{(1)*})$; \\
\quad \quad Sample $\bfxi^{(2)}\sim \cN_c(\bf{0}, \bX^{(2)*})$; \\

S2: ~a) Let ${t^{(1)}}=\max_{i\in \mathcal{I}}\bigg\{\sqrt{\frac{1}{\bfxi^{(1)H}\bH_i\bfxi^{(1)}}}\bigg\}$;\\

\quad \quad b) Let  ${t^{(2)}}=\max_{i\in \mathcal{M}\setminus\mathcal{I}}\bigg\{\sqrt{\frac{1}{\bfxi^{(2)H}\bH_i\bfxi^{(2)}}}\bigg\}$;\\

S4: Let $\bar\bx^{(1)}={t^{(1)}}\bfxi^{(1)}$, $\bar\bx^{(2)}={t^{(2)}}\bfxi^{(2)}$.\\

\\
  \hline
\end{tabular}}
\vspace{-0.3cm}
\end{center}
\end{table}

Note that after a single execution of the algorithm, we obtain a feasible solution $(\bar\bx^{(1)}, \bar\bx^{(2)}, \bar\bfalpha^*)$ for the original problem \eqref{problem},  and we have
$$v^{P1}(\bar\bx^{(1)}, \bar\bx^{(2)})=(t^{(1)})^2\|\bfxi^{(1)}\|^2+(t^{(2)})^2\|\bfxi^{(2)}\|^2.$$
In the following two theorems, we will show the approximation ratio for our algorithm for both real and complex cases.

\begin{theorem} \label{thm:2.1}
There exists a constant $\sigma>0$ such that
\begin{align}
P\left(v^{\rm P1}(\bar\bx^{(1)}, \bar\bx^{(2)})\le\mu v^{\rm SDP1}\right)\ge \sigma,\label{prob:firstmodel}
\end{align}
with $\mu=\frac{54M^2}{\pi}$ when  $\mathbb{F}=\mathbb{R}$ and $\mu=\frac{24M}{\sqrt{\pi}}$ when  $\mathbb{F}=\mathbb{C}$.
\end{theorem}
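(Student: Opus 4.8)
The plan is to analyze the randomized rounding procedure in Table~\ref{tableRandomize} by controlling two sources of loss separately: the loss from rounding $\bfalpha^*$ to $\bar\bfalpha^*$, and the loss from the Gaussian sampling together with the scaling factors $t^{(1)},t^{(2)}$. First I would observe that the rounding in S0 guarantees that for each $i\in\mathcal I$ we have $\alpha_i^*\ge \tfrac12$, hence $\trace[\bH_i\bX^{*(1)}]\ge\tfrac12$, and for each $i\in\mathcal M\setminus\mathcal I$ we have $\alpha_i^*<\tfrac12$, hence $\trace[\bH_i\bX^{*(2)}]\ge 1-\alpha_i^*>\tfrac12$. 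Thus $2\bX^{*(1)}$ satisfies $\trace[\bH_i(2\bX^{*(1)})]\ge 1$ for $i\in\mathcal I$, and likewise $2\bX^{*(2)}$ for the complementary index set. This reduces the problem to two \emph{independent} instances of the classical continuous multicast QCQP relaxation analyzed in \cite{Luo07approximationbounds}: one over the index set $\mathcal I$ (with $|\mathcal I|\le M$ constraints) and one over $\mathcal M\setminus\mathcal I$, each with an extra factor of $2$ absorbed into the objective.

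Next I would invoke (or re-derive, following \cite{Luo07approximationbounds}) the key estimates for $\bfxi^{(q)}\sim\cN_c(\bzero,\bX^{*(q)})$. For the numerator, since $\bbE[\|\bfxi^{(q)}\|^2]=\trace[\bX^{*(q)}]$, a Markov bound gives $\|\bfxi^{(q)}\|^2\le c_1\trace[\bX^{*(q)}]$ with constant probability. For the scaling factor, note $(t^{(q)})^2=1/\min_{i}\bfxi^{(q)H}\bH_i\bfxi^{(q)}$, and $\bfxi^{(q)H}\bH_i\bfxi^{(q)}=|\bh_i^H\bfxi^{(q)}|^2$ is (up to the scalar $\trace[\bH_i\bX^{*(q)}]$) a chi-square-type variable with mean $\trace[\bH_i\bX^{*(q)}]\ge\tfrac12$; a small-ball lower bound $P(|\bh_i^H\bfxi^{(q)}|^2\ge \gamma\,\trace[\bH_i\bX^{*(q)}])\ge 1-c\sqrt{\gamma}$ (the source of the $1/\sqrt{\pi}$ in the complex case and of the quadratic $M^2$ in the real case) combined with a union bound over the at-most-$M$ indices yields, for a suitable $\gamma$ of order $1/M^2$ (real) or $1/M$ (complex), that $\min_i|\bh_i^H\bfxi^{(q)}|^2\ge \gamma/2$ with constant probability, i.e.\ $(t^{(q)})^2\le 2/\gamma$. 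Multiplying the two bounds, $(t^{(q)})^2\|\bfxi^{(q)}\|^2\le (2/\gamma)\cdot c_1\trace[\bX^{*(q)}]$, and summing over $q=1,2$ and choosing the constants so that the stated $\mu$ comes out, one gets $v^{\rm P1}(\bar\bx^{(1)},\bar\bx^{(2)})\le\mu v^{\rm SDP1}$ on the intersection of the relevant good events, which has probability at least some $\sigma>0$ (by a union bound on the complements of the finitely many constant-probability events). The factor $54=27\cdot 2$ versus the $27M^2/\pi$ of \cite{Luo07approximationbounds} is exactly the extra $2$ from the rounding step.

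The main obstacle I anticipate is the precise bookkeeping of constants so that the union bound over the $O(M)$ small-ball events still leaves a strictly positive success probability $\sigma$ \emph{and} simultaneously the approximation ratio is exactly $\tfrac{54M^2}{\pi}$ (resp.\ $\tfrac{24M}{\sqrt\pi}$); this requires calibrating $\gamma$, the Markov constant $c_1$, and the tail constant $c$ against each other. A secondary subtlety is that $\bX^{*(q)}$ may be rank-deficient or even zero on some coordinates — if $\bX^{*(q)}=\bzero$ then necessarily the corresponding index set is empty (else the SDP would be infeasible), so the degenerate cases must be handled by noting the corresponding $t^{(q)}$ and term simply drop out. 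One must also be careful that the events for $q=1$ and $q=2$ are independent (since $\bfxi^{(1)},\bfxi^{(2)}$ are sampled independently), so the joint good event has probability at least the product of the two individual constant probabilities, which is still a positive constant $\sigma$.
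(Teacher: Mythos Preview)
Your proposal is correct and follows essentially the same approach as the paper: the rounding step yields $\trace[\bH_i\bX^{*(q)}]\ge\tfrac12$ on the relevant index sets, after which a Markov bound on the norms together with the rank-one small-ball lemmas of \cite{Luo07approximationbounds} and a union bound over the $M$ constraints give the stated $\mu$. The only cosmetic difference is that the paper applies a \emph{single} Markov inequality to the sum $\|\bfxi^{(1)}\|^2+\|\bfxi^{(2)}\|^2$ and a single union bound over all $|\mathcal I|+|\mathcal M\setminus\mathcal I|=M$ small-ball events (which is what produces the explicit constant $\sigma=(2-\sqrt\pi)/3$), rather than handling $q=1,2$ separately and combining via independence as you suggest; your route would yield a smaller---but still strictly positive---$\sigma$.
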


\begin{proof}
Let us pick any $\alpha>0$, and we have the following series of inequalities
\begin{align}
&P\left(v^{\rm P1}(\bar\bx^{(1)}, \bar\bx^{(2)})\le\mu v^{\rm SDP1}\right)\nonumber\\
&=P\left(\|\bar\bx^{(1)}\|^2+\|\bar\bx^{(2)}\|^2\le
\mu(\trace[\bX^{*(1)}]+\trace[\bX^{*(2)}])\right)\nonumber\\
&\ge P\left(\|\bfxi^{(1)}\|^2+\|\bfxi^{(2)}\|^2\le
\frac{\mu}{\alpha}(\trace[\bX^{*(1)}+\trace[\bX^{*(2)})], (t^{(1)})^2\le \alpha, (t^{(2)})^2\le \alpha \right)\nonumber\\
&\ge 1-P\left((t^{(1)})^2> \alpha\right)-P\left( (t^{(2)})^2>\alpha\right)-
P\left(\|\bfxi^{(1)}\|^2+\|\bfxi^{(2)}\|^2>
\frac{\mu}{\alpha}(\trace[\bX^{*(1)}+\trace[\bX^{*(2)})]\right)\nonumber\\
&= P\left((t^{(1)})^2\le \alpha\right)+P\left( (t^{(2)})^2\le
\alpha\right)-1-P\left(\|\bfxi^{(1)}\|^2+\|\bfxi^{(2)}\|^2>
\frac{\mu}{\alpha}(\trace[\bX^{*(1)}+\trace[\bX^{*(2)})]\right)\nonumber\\
& \ge P\left(\frac{1}{\left(\bfxi^{(1)}\right)^H\bH_i\bfxi^{(1)}}\le \alpha, \forall\;i\in \mathcal{I}\right)+ P\left(\frac{1}{\left(\bfxi^{(2)}\right)^H\bH_i\bfxi^{(2)}}\le \alpha, \forall\; i\in\mathcal{M}\setminus\mathcal{I}\right)\nonumber\\
 &\quad -1-\frac{\alpha}{\mu}.
\end{align}
where the last inequality is from the Markov's inequality.  Since
\begin{align}
&\trace\left[\bH_i\bX^{*(1)}\right]\ge \alpha_i^*\ge \frac{1}{2},  \forall\; i \in \mathcal{I},\\
&\trace\left[\bH_i\bX^{*(2)}\right]\ge 1-\alpha_i^*\ge \frac{1}{2},  \forall\; i \in \mathcal{M}\setminus\mathcal{I},
\end{align}
we have
\begin{align}
&P\left(v^{\rm P1}(\bar\bx^{(1)}, \bar\bx^{(2)})\le\mu v^{\rm SDP1}\right)\nonumber\\
&\ge P\left(\left(\bfxi^{(1)}\right)^H\bH_i\bfxi^{(1)}\ge \frac{2\trace[\bH_i\bX^{*(1)}]}{\alpha}, \forall\; i\in \mathcal{I}\right)\nonumber\\
&\quad + P\left(\left(\bfxi^{(2)}\right)^H\bH_i\bfxi^{(2)}\ge \frac{2\trace[\bH_i\bX^{*(2)}]}{\alpha}, \forall\; i\in\mathcal{M}\setminus\mathcal{I}\right) -1-\frac{\alpha}{\mu}\nonumber\\
&= 1-  P\left(\left(\bfxi^{(1)}\right)^H\bH_i\bfxi^{(1)}\le \frac{2\trace[\bH_i\bX^{*(1)}]}{\alpha}, ~\mbox{for some}~ i\in \mathcal{I}\right)\nonumber\\
&\quad -  P\left(\left(\bfxi^{(2)}\right)^H\bH_i\bfxi^{(2)}\le \frac{2\trace[\bH_i\bX^{*(2)}]}{\alpha},~\mbox{for some}~ i\in\mathcal{M}\setminus\mathcal{I}\right)-\frac{\alpha}{\mu}.\label{probbound}
\end{align}
When $\mathbb{F}=\mathbb{R}$, then by utilizing \cite[Lemma 1]{Luo07approximationbounds}, and from \eqref{probbound}, we obtain
\begin{align}
P\left(v^{\rm P1}(\bar\bx^{(1)}, \bar\bx^{(2)})\le\mu v^{\rm SDP1}\right)
\ge 1-\sum_{i=1}^{M}\max\left\{\sqrt{\frac{2}{\alpha}}, \frac{2(\bar{r}_i-1)}{\pi-2}\cdot\frac{2}{\alpha}\right\}-\frac{\alpha}{\mu}.
\end{align}
Since $\bH_i=\bh_i\bh_i^H$, we have $\bar{r}_i=1$ for all $i=1,\cdots, M$ and thus
\begin{align}
P\left(v^{\rm P1}(\bar\bx^{(1)}, \bar\bx^{(2)})\le\mu v^{\rm SDP1}\right) &\ge 1-M\cdot\sqrt{\frac{2}{\alpha}}-\frac{\alpha}{\mu}.
\end{align}
By choosing $\alpha=\frac{18M^2}{\pi}$, and set $\mu=3\alpha$ and $\sigma=\frac{2-\sqrt{\pi}}{3}$,
we have $$P\left(v^{\rm P1}(\bar\bx^{(1)}, \bar\bx^{(2)})\le\mu v^{\rm SDP1}\right)\ge \sigma>0.$$
When $\mathbb{F}=\mathbb{C}$, then by utilizing \cite[Lemma 3]{Luo07approximationbounds}, and from \eqref{probbound}, we obtain
\begin{align}
P\left(v^{\rm P1}(\bar\bx^{(1)}, \bar\bx^{(2)})\le\mu v^{\rm SDP1}\right)
\ge 1-\sum_{i=1}^{M}\max\left\{\frac{4}{3}\cdot\frac{2}{\alpha}, 16(\bar{r}_i-1)^2\cdot\frac{4}{\alpha^2}\right\}-\frac{\alpha}{\mu}.
\end{align}
Since $\bH_i=\bh_i\bh_i^H$, we have $\bar{r}_i=1$ for all $i=1,\cdots, M$ and thus
\begin{align}
P\left(v^{\rm P1}(\bar\bx^{(1)}, \bar\bx^{(2)})\le\mu v^{\rm SDP1}\right) &\ge 1-M\cdot\frac{8}{3\alpha}-\frac{\alpha}{\mu}.
\end{align}
By choosing $\alpha=\frac{8M}{\sqrt{\pi}}$, and set $\mu=3\alpha$ and $\sigma=\frac{2-\sqrt{\pi}}{3}$,
we have $$P\left(v^{\rm P1}(\bar\bx^{(1)}, \bar\bx^{(2)})\le\mu v^{\rm SDP1}\right)\ge \sigma>0.$$
This completes the proof.
\end{proof}

\begin{theorem} \label{thm:2.2}
For \eqref{problem} and its SDP relaxation \eqref{prblemSDR}, we have
\begin{align}
v^{\rm P1}\le\mu v^{\rm SDP1},
\end{align}
with $\mu=\frac{54M^2}{\pi}$ when  $\mathbb{F}=\mathbb{R}$ and $\mu=\frac{24M}{\sqrt{\pi}}$ when  $\mathbb{F}=\mathbb{C}$.
\end{theorem}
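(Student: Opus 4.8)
The plan is to derive Theorem~\ref{thm:2.2} as an immediate deterministic consequence of the probabilistic guarantee in Theorem~\ref{thm:2.1}, using the fact that a single run of the randomization procedure in Table~\ref{tableRandomize} always returns a point $(\bar\bx^{(1)}, \bar\bx^{(2)}, \bar\bfalpha^*)$ feasible for \eqref{problem}. First I would record this feasibility fact explicitly: by construction $\bar\bfalpha^*_i \in \{0,1\}$; for every $i \in \mathcal{I}$ we have $\bar\bx^{(1)H}\bH_i\bar\bx^{(1)} = (t^{(1)})^2\,\bfxi^{(1)H}\bH_i\bfxi^{(1)} \ge 1 = \bar\bfalpha^*_i$ because $t^{(1)} \ge (\bfxi^{(1)H}\bH_i\bfxi^{(1)})^{-1/2}$, while for $i \notin \mathcal{I}$ the constraint $\bar\bx^{(1)H}\bH_i\bar\bx^{(1)} \ge 0 = \bar\bfalpha^*_i$ holds trivially since $\bH_i \succeq 0$; the two constraint families involving $\bar\bx^{(2)}$ and $1-\bar\bfalpha^*_i$ are verified symmetrically. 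Consequently, for \emph{every} realization of the random vectors, $v^{\rm P1} \le \|\bar\bx^{(1)}\|^2 + \|\bar\bx^{(2)}\|^2 = v^{\rm P1}(\bar\bx^{(1)}, \bar\bx^{(2)})$, since $v^{\rm P1}$ is the minimum of \eqref{problem}.

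Next I would invoke Theorem~\ref{thm:2.1}: since $P\big(v^{\rm P1}(\bar\bx^{(1)}, \bar\bx^{(2)}) \le \mu v^{\rm SDP1}\big) \ge \sigma > 0$ for the stated values of $\mu$, the event in question is nonempty, so there exists at least one realization $(\bfxi^{(1)}, \bfxi^{(2)})$ for which $v^{\rm P1}(\bar\bx^{(1)}, \bar\bx^{(2)}) \le \mu v^{\rm SDP1}$. Chaining this with the feasibility bound of the previous paragraph yields $v^{\rm P1} \le \mu v^{\rm SDP1}$, with $\mu=\frac{54M^2}{\pi}$ when $\mathbb{F}=\mathbb{R}$ and $\mu=\frac{24M}{\sqrt{\pi}}$ when $\mathbb{F}=\mathbb{C}$.

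There is essentially no hard step here; all the substance lies in Theorem~\ref{thm:2.1}. The only points needing a line of care are: (i) checking that $t^{(1)}$ and $t^{(2)}$ are well defined, i.e., that the quadratic forms $\bfxi^{(1)H}\bH_i\bfxi^{(1)}$ and $\bfxi^{(2)H}\bH_i\bfxi^{(2)}$ are strictly positive, which holds on the positive-probability event furnished by Theorem~\ref{thm:2.1} and hence does not affect the argument; and (ii) observing that only one good sample is required, so no union bound or repetition count enters. Optionally I would also remark that $v^{\rm SDP1} \le v^{\rm P1}$ because \eqref{prblemSDR} is a relaxation of \eqref{problem}, which would sandwich $v^{\rm P1}$ between $v^{\rm SDP1}$ and $\mu v^{\rm SDP1}$, though this is not needed for the statement as written.
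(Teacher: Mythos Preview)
Your proposal is correct and follows essentially the same approach as the paper: both argue that the positive-probability event of Theorem~\ref{thm:2.1} is nonempty, pick a realization from it, and combine with feasibility of the rounded point to conclude $v^{\rm P1}\le \mu\, v^{\rm SDP1}$. The only cosmetic difference is that the paper re-opens the proof of Theorem~\ref{thm:2.1} and bounds $\|\bar\bx^{(1)}\|^2+\|\bar\bx^{(2)}\|^2$ via the two underlying events $\{(t^{(q)})^2\le \alpha\}$ and $\{\|\bfxi^{(1)}\|^2+\|\bfxi^{(2)}\|^2\le 3\,v^{\rm SDP1}\}$ separately, whereas you invoke the conclusion of Theorem~\ref{thm:2.1} directly; your version is slightly cleaner but not a genuinely different route.
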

\begin{proof}
We see from the inequality \eqref{prob:firstmodel} that there is a positive
probability (independent of problem size) of at least
$$\frac{2-\sqrt{\pi}}{3}=0.0758\ldots$$ that
$$\max\left\{\max_{i\in \mathcal{I}}\bigg\{\sqrt{\frac{1}{\bfxi^{(1)H}\bH_i\bfxi^{(1)}}}\bigg\},\max_{i\in \mathcal{M}\setminus\mathcal{I}}\bigg\{\sqrt{\frac{1}{\bfxi^{(2)H}\bH_i\bfxi^{(2)}}}\bigg\}\right\}\leq \alpha,$$ with $\alpha=\frac{8M}{\sqrt{\pi}}$ and
$$
\|\bfxi^{(1)}\|^2+\|\bfxi^{(2)}\|^2\leq 3(\trace[{\bX}^{*(1)}]+\trace[{\bX}^{*(2)}]).$$
Let $\bfxi^{(1)}$ and $\bfxi^{(2)}$ be any
vectors satisfying these two conditions\footnote{The probability that no such $\bfxi^{(1)}$ or $\bfxi^{(2)}$ are generated after $N$ independent trials is at most $(1-0.0758\ldots)^N$,
which for $N=100$ equals $0.000375\ldots$. Thus, such $\bfxi^{(1)}$ or $\bfxi^{(2)}$ require relatively few trials to generate, see also in \cite{Luo07approximationbounds}.
}. Then $\bx$ is feasible for
\eqref{problem}, so that
\begin{align}
v^{{\rm P1}}&\le \|\bar\bx^{(1)}\|^2+\|\bar\bx^{(2)}\|^2\nonumber\\
&=(\|\bfxi^{(1)}\|^2+\|\bfxi^{(2)}\|^2)\cdot \max\left\{\max_{i\in \mathcal{I}}\bigg\{\sqrt{\frac{1}{\bfxi^{(1)H}\bH_i\bfxi^{(1)}}}\bigg\},\max_{i\in \mathcal{M}\setminus\mathcal{I}}\bigg\{\sqrt{\frac{1}{\bfxi^{(2)H}\bH_i\bfxi^{(2)}}}\bigg\}\right\}\nonumber\\
&\leq \alpha\cdot 3(\trace[{\bX}^{*(1)}]+\trace[{\bX}^{*(2)}])=\mu \cdot  v^{{\rm SDP1}},
\end{align}
where the last equality uses $\trace[{\bX}^{*(1)}]+\trace[{\bX}^{*(2)}]=v^{{\rm SDP1}}$ and $\mu$ is defined as in Theorem \ref{thm:2.1}.
\end{proof}

\subsection{The Proposed Algorithm for \eqref{problemQReformulate} and Its Approximation Ratio}
We consider a relaxation of problem
\eqref{problemQReformulate}, expressed as follows:
\begin{align}
\min_{\bX^{(q)},  \bfalpha^{(q)}}&\quad \sum_{q=1}^{Q}\trace[\bX^{(q)}]\nonumber\\
{\st}&\quad\trace[\bH_{i}\bX^{(q)}]\ge \alpha_i^{(q)}, i=1,\cdots,M,\ q=1\cdots, Q,\nonumber\\
&\quad  \sum_{q=1}^{Q}\alpha_i^{(q)}\ge P_i, \ i=1,\cdots,M,\ q=1\cdots, Q,\tag{SDP2}\label{problemSDR2}\\
&\quad 0\leq \alpha^{(q)}_i \leq 1, i=1,\cdots,M,\ q=1\cdots, Q,\nonumber\\
&\quad \bX^{(q)}\succeq 0, \ q=1\cdots, Q,\nonumber
\end{align}
where we do SDP relaxation for the continuous variables and continuous relaxation for the binary variables. Let $\bhX^{(q)}$ and $\hat{\bfalpha}^{(q)}=(\hat{\alpha}_1^{(q)}, \hat{\alpha}_2^{(q)}, \cdots, \hat{\alpha}_M^{(q)})^T$, $q=1,\cdots,Q$ denote the optimal
solution to this SDP problem,  and let $v^{\rm
SDP2}=\sum_{q=1}^{Q}\trace[\bhX^{(q)}]$ denote its optimal
objective value.

We propose the algorithm listed in Table \ref{tableRandomize_G} to obtain a feasible solution
$\{(\tilde\bx^{(q)}, \tilde\bfalpha^{(q)})\}_{q=1,\cdots,Q}$ to the problem \eqref{problemQReformulate}. The main idea of the algorithm is given below. For each user $i$, we first construct a set $\mathcal{P}_i$ that collects the largest $P_i$ components from $\{\hat{\alpha}^{(1)}_i, \cdots, \hat{\alpha}^{(q)}_i\}$. The binary variables are then determined using these sets $\mathcal{P}_i$, $i=1,\cdots, M$. Once the binary variables are fixed, standard randomization technique is use to construct each continuous variable $\tilde{\mathbf{x}}^{(q)}$.

\begin{table}[htb]
\begin{center}
\vspace{-0.1cm} \caption{ The Randomization Procedure}
\label{tableRandomize_G} {\small
\begin{tabular}{|l|}
\hline
\\
S0: Denote $\mathcal{Q}:=\{1, \dots, Q\}$ and define index sets:\\
\quad\quad $\mathcal{P}_i:=\{j \mid \hat{\alpha}_i^{(j)}\ge \hat{\alpha}_i^{[P_i]}, j=1, \cdots, Q\}, \ i=1,\cdots, M,$\\
\quad \quad where $\hat{\alpha}_i^{[P_i]}$ is the $P_i$-th largest element in vector $(\hat{\alpha}_i^{(1)}, \hat{\alpha}_i^{(2)}, \cdots, \hat{\alpha}_i^{(Q)})$. \\

S1: For $i=1, \cdots, M$, \\
      \quad \quad set $\tilde\alpha_i^{(q)}=1$ for all $q\in \mathcal{P}_i$ and $i=1, \cdots, M$;\\
      \quad \quad otherwise set $\tilde\alpha_i^{(q)}=0$ for all $q\in \mathcal{Q}\setminus\mathcal{P}_i$ and $i=1, \cdots, M$.\\
\quad \quad Let $\tilde\bfalpha^{(q)}=(\tilde\alpha_1^{(q)}, \tilde\alpha_2^{(q)}, \cdots, \tilde\alpha_M^{(q)})^T$ for all $q=1,\cdots,Q$.\\

S2: For all $q\in  \mathcal{Q}$, denote\\
\quad \quad $\mathcal{S}^{(q)}=\{i \mid \tilde\alpha_i^{(q)}=1, \ i=1, \cdots, M\}$.\\

S3: Generate a random vector $\bfxi^{(q)}$ from the Normal distribution $\cN_c({\bf{0}}, \bhX^{(q)})$, $q=1,\cdots, Q$. \\

S4: Let $\tilde\bx^{(q)}=\tilde{t}^{(q)}\bfxi^{(q)}$, with\\
\quad \quad
${\tilde{t}}^{(q)}=\sqrt{\max_{i\in\mathcal{S}^{(q)}}
\bigg\{\frac{1}{\bfxi^{(q)H}\bH_i\bfxi^{(q)}}\bigg\}}$, for all $q=1, \cdots, Q$.\\
\\
  \hline
\end{tabular}}
\vspace{-0.3cm}
\end{center}
\end{table}


\begin{lemma}\label{claimXLowerBound}
{\it For all $i=1, \cdots, M$, if $q\in \mathcal{P}_i$, then $\hat{\alpha}_i^{(q)}\ge \frac{1}{Q-P_i+1}$}.
\end{lemma}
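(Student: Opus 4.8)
The plan is to exploit only the feasibility constraints of the SDP relaxation \eqref{problemSDR2}, namely $\sum_{q=1}^{Q}\hat{\alpha}_i^{(q)}\ge P_i$ together with $0\le\hat{\alpha}_i^{(q)}\le 1$, through a simple sorting-and-averaging argument applied, for each fixed $i$, to the $Q$ numbers $\hat{\alpha}_i^{(1)},\dots,\hat{\alpha}_i^{(Q)}$.

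First I would fix $i$ and relabel so that $\hat{\alpha}_i^{[1]}\ge\hat{\alpha}_i^{[2]}\ge\cdots\ge\hat{\alpha}_i^{[Q]}$, where $\hat{\alpha}_i^{[k]}$ is the $k$-th largest of $\{\hat{\alpha}_i^{(1)},\dots,\hat{\alpha}_i^{(Q)}\}$, consistent with the definition in S0 of Table \ref{tableRandomize_G}. By the very definition of $\mathcal{P}_i$, every $q\in\mathcal{P}_i$ satisfies $\hat{\alpha}_i^{(q)}\ge\hat{\alpha}_i^{[P_i]}$, so it suffices to establish the lower bound $\hat{\alpha}_i^{[P_i]}\ge\frac{1}{Q-P_i+1}$.

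Next I would split the feasibility inequality $\sum_{k=1}^{Q}\hat{\alpha}_i^{[k]}\ge P_i$ into its first $P_i-1$ terms and its remaining $Q-P_i+1$ terms. Since each $\hat{\alpha}_i^{[k]}\le 1$, the first block contributes at most $P_i-1$, hence $\sum_{k=P_i}^{Q}\hat{\alpha}_i^{[k]}\ge P_i-(P_i-1)=1$. Each of the $Q-P_i+1$ summands in this second block is at most $\hat{\alpha}_i^{[P_i]}$ by the ordering, so $(Q-P_i+1)\,\hat{\alpha}_i^{[P_i]}\ge 1$, i.e. $\hat{\alpha}_i^{[P_i]}\ge\frac{1}{Q-P_i+1}$. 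Combining this with the observation from the previous paragraph yields $\hat{\alpha}_i^{(q)}\ge\frac{1}{Q-P_i+1}$ for every $q\in\mathcal{P}_i$, which is the claim.

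I do not anticipate any genuine obstacle; the only points needing a little care are that possible ties in the definition of $\mathcal{P}_i$ are harmless (the argument uses only $\hat{\alpha}_i^{(q)}\ge\hat{\alpha}_i^{[P_i]}$ for $q\in\mathcal{P}_i$), and that the index ranges in the split sum remain valid in the boundary cases $P_i=1$ (empty first block) and $P_i=Q$ (the bound reads $\hat{\alpha}_i^{[Q]}\ge 1$), both of which are consistent with $0\le\hat{\alpha}_i^{(q)}\le 1$.
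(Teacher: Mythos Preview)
Your argument is correct and follows essentially the same approach as the paper: both use the feasibility constraints $\sum_{q}\hat{\alpha}_i^{(q)}\ge P_i$ and $\hat{\alpha}_i^{(q)}\le 1$ to bound the $P_i$-th largest entry from below by $\tfrac{1}{Q-P_i+1}$. The only cosmetic difference is that the paper phrases the sum-splitting step as a proof by contradiction, whereas you present it directly.
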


\begin{proof}
First for any given $i$ in the set $\{1, \cdots, M\}$, by the feasibility of $\hat{\alpha}_i^{(q)}$, we have that
\begin{align}
\sum_{q=1}^{Q}\hat{\alpha}_i^{(q)}\ge P_i.\label{eqSumXLowerBound}
\end{align}
We claim that at least $P_i$ elements of the vector $\{\hat{\alpha}_i^{(1)}, \hat{\alpha}_i^{(2)}, \cdots, \hat{\alpha}_i^{(Q)}\}$ is greater than or equal to $\frac{1}{Q-P_i+1}$. We prove this claim by contradiction. Suppose that at most $P_i-1$ elements in this set is greater than or
equal to $\frac{1}{Q-P_i+1}$. Then we have
\begin{align}
&\sum_{q=1}^{Q}\hat{\alpha}_i^{(q)}<(Q-(P_i-1))\left(\frac{1}{Q-P_i+1}\right)+(P_i-1)=P_i.
\end{align}
This contradicts \eqref{eqSumXLowerBound}. The claim is proved. Since $\mathcal{P}_i$ contains the $P_i$ largest elements of the vector $\{\hat{\alpha}_i^{(1)}, \hat{\alpha}_i^{(2)}, \cdots, \hat{\alpha}_i^{(Q)}\}$, this completes the proof.
\end{proof}

Next, we analyze the above proposed algorithm.

\begin{theorem}\label{claimLowerBoundProbability}
{\it There exists a positive constant $\sigma_2$ such that
\[\Prob\left(\sum_{q=1}^{Q}\|\tilde\bx^{q}\|^2\le \bar\mu \left(\sum_{q=1}^{Q}\mbox{\trace}[\bhX^{(q)}]\right)\right)\ge
\sigma_2>0.\]
with
$$\bar\mu=\frac{27(\sum_{i=1}^{M}P_i\sqrt{Q-P_i+1})^2}{\pi},~ \mbox{when}~ \mathbb{F}=\mathbb{R};$$
and
$$\bar\mu=\frac{12\sum_{i=1}^{M}P_i(Q-P_i+1)}{\sqrt{\pi}},~ \mbox{when}~ \mathbb{F}=\mathbb{C}.$$}
\end{theorem}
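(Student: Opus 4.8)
The plan is to mirror the structure of the proof of Theorem~\ref{thm:2.1}, but now accounting for the fact that there are $Q$ beamformers and that the relevant lower bound on $\trace[\bH_i\bhX^{(q)}]$ is no longer $\frac12$ but the quantity $\frac{1}{Q-P_i+1}$ supplied by Lemma~\ref{claimXLowerBound}. First I would pick a scalar $\alpha>0$ to be fixed later and bound the target probability from below by intersecting the event $\{(\tilde t^{(q)})^2\le\alpha,\ \forall q\}$ with the event $\{\sum_q\|\bfxi^{(q)}\|^2\le\frac{\bar\mu}{\alpha}\sum_q\trace[\bhX^{(q)}]\}$; applying the union bound and Markov's inequality to the latter gives a $-\frac{\alpha}{\bar\mu}$ term, exactly as in \eqref{probbound}. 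The event $(\tilde t^{(q)})^2\le\alpha$ is, by definition of $\tilde t^{(q)}$ in Table~\ref{tableRandomize_G}, the event that $\bfxi^{(q)H}\bH_i\bfxi^{(q)}\ge\frac1\alpha$ for every $i\in\mathcal S^{(q)}$, and since $i\in\mathcal S^{(q)}$ means $q\in\mathcal P_i$, Lemma~\ref{claimXLowerBound} gives $\trace[\bH_i\bhX^{(q)}]\ge\frac{1}{Q-P_i+1}$, so this event contains $\{\bfxi^{(q)H}\bH_i\bfxi^{(q)}\ge (Q-P_i+1)\,\alpha^{-1}\,\trace[\bH_i\bhX^{(q)}],\ \forall i\in\mathcal S^{(q)}\}$.

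Next I would control each of these $Q$ events using \cite[Lemma 1]{Luo07approximationbounds} in the real case and \cite[Lemma 3]{Luo07approximationbounds} in the complex case, applied with the scaling factor $\frac{\alpha}{Q-P_i+1}$ in place of $\frac{\alpha}{2}$. Because every $\bH_i=\bh_i\bh_i^H$ is rank one, we again have $\bar r_i=1$, so the max in those lemmas collapses to its first argument and each term contributes, in the real case, $\sum_{i\in\mathcal S^{(q)}}\sqrt{(Q-P_i+1)/\alpha}$, and in the complex case $\sum_{i\in\mathcal S^{(q)}}\frac{4(Q-P_i+1)}{3\alpha}$. Summing the failure probabilities over all $q$ and using that $i$ belongs to $\mathcal S^{(q)}$ for exactly $P_i$ values of $q$ (namely $q\in\mathcal P_i$, and $|\mathcal P_i|=P_i$) turns $\sum_q\sum_{i\in\mathcal S^{(q)}}$ into $\sum_{i=1}^M P_i$-weighted sums, producing $\sum_i P_i\sqrt{(Q-P_i+1)/\alpha}$ in the real case and $\sum_i P_i\cdot\frac{4(Q-P_i+1)}{3\alpha}$ in the complex case. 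Collecting terms, the bound becomes
\[
\Prob\!\left(\cdots\right)\ \ge\ 1-\frac{1}{\sqrt{\alpha}}\sum_{i=1}^M P_i\sqrt{Q-P_i+1}-\frac{\alpha}{\bar\mu}
\]
in the real case, and the analogous inequality with $\frac{4}{3\alpha}\sum_i P_i(Q-P_i+1)$ in the complex case.

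Finally I would choose $\alpha$ and $\bar\mu$ to make the right-hand side a positive absolute constant. In the real case, setting $\alpha=\big(\sum_i P_i\sqrt{Q-P_i+1}\big)^2\cdot\frac{9}{\pi}$ and $\bar\mu=3\alpha=\frac{27(\sum_i P_i\sqrt{Q-P_i+1})^2}{\pi}$ gives the stated $\bar\mu$ and leaves $1-\frac{\sqrt\pi}{3}-\frac13=\frac{2-\sqrt\pi}{3}$, so $\sigma_2=\frac{2-\sqrt\pi}{3}>0$ works; in the complex case, setting $\alpha=\frac{4\sum_i P_i(Q-P_i+1)}{\sqrt\pi}$ and $\bar\mu=3\alpha=\frac{12\sum_i P_i(Q-P_i+1)}{\sqrt\pi}$ again leaves $\frac{2-\sqrt\pi}{3}$. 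The one genuinely delicate point, and the place I would be most careful, is the bookkeeping in the union bound over $q$: one must verify that the events being unioned are precisely the failure events of the $\tilde t^{(q)}$-bound and that each index $i$ is counted with multiplicity exactly $P_i=|\mathcal P_i|$ when the double sum is reorganized — an off-by-one here (e.g., using $Q-P_i$ versus $Q-P_i+1$, or miscounting $|\mathcal P_i|$) would throw off the final constant. Everything else is a direct, if slightly longer, transcription of the argument already given for Theorem~\ref{thm:2.1}.
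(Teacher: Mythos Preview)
Your proposal is correct and follows essentially the same route as the paper's own proof: introduce a free parameter $\alpha$, split via the events $\{(\tilde t^{(q)})^2\le\alpha\}$ and the Markov bound on $\sum_q\|\bfxi^{(q)}\|^2$, invoke Lemma~\ref{claimXLowerBound} to replace $\frac1\alpha$ by $\frac{Q-P_i+1}{\alpha}\trace[\bH_i\bhX^{(q)}]$, apply \cite[Lemmas~1,~3]{Luo07approximationbounds} with $\bar r_i=1$, swap the double sum $\sum_q\sum_{i\in\mathcal S^{(q)}}=\sum_i\sum_{q\in\mathcal P_i}$, and then pick $\alpha$ and $\bar\mu=3\alpha$ exactly as you do to land on $\sigma_2=\frac{2-\sqrt\pi}{3}$. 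The constants and the final choices of $\alpha$ coincide with the paper's.
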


\begin{proof}
The desired probability can be bounded below as follows
\begin{align}
&\quad\Prob\left(v^{\rm
P2} \le \bar\mu\  v^{\rm
SDP2} \right)\nonumber\\
&\ge \Prob\left(\sum_{q=1}^{Q}\|\tilde\bx^{q}\|^2\le \bar\mu \left(\sum_{q=1}^{Q}\trace[\bhX^{(q)}]\right)\right)\nonumber\\
&\ge \Prob\left(\sum_{q=1}^{Q}(\tilde{t}^{(q)})^2\|\bfxi^{(q)}\|^2\le
\bar\mu\left(\sum_{q=1}^{Q}\trace[\bhX^{(q)}]\right)\right)\nonumber\\
&\ge \Prob\left(\sum_{q=1}^{Q}\|\bfxi^{(q)}\|^2\le
\frac{\bar\mu}{\alpha_2}\left(\sum_{q=1}^{Q}\trace[\bhX^{(q)}]\right), \frac{1}{\bfxi^{(q)}\bH_i\bfxi^{(q)}}\le \alpha_2, \forall\;i\in \mathcal{S}^{(q)} {\rm and}\ q\in \mathcal{Q}\right).\nonumber
\end{align}
Now notice that for all $i\in \mathcal{M}$ and $q\in \mathcal{Q}$, if $i \in \mathcal{S}^{(q)}$, then it can be easily checked that $q\in \mathcal{P}_i$.
It follows that, by using Lemma \ref{claimXLowerBound} we have that
\begin{align}
\trace\left[\bH_i\hat\bX^{(q)}\right]\ge \hat{\alpha}_i^{(q)} \ge \frac{1}{Q-P_i+1},\ {\rm when}~ i \in \mathcal{S}^{(q)} ~{\rm and}~ q\in \mathcal{Q}.
\end{align}
Then, we have
\begin{align}
&\quad\Prob\left(v^{\rm
P2} \le \bar\mu\  v^{\rm
SDP2} \right)\nonumber\\
&\ge\Prob\left[\sum_{q=1}^{Q}\|\bfxi^{(q)}\|^2\le
\frac{\bar\mu}{\alpha_2}\left(\sum_{q=1}^{Q}\trace[\bhX^{(q)}]\right), \right. \nonumber\\
& \qquad \qquad\left. \bfxi^{(q)}\bH_i\bfxi^{(q)}\ge \frac{Q-P_i+1}{\alpha_2} \trace [\bH_i\hat\bX^{(q)}], \forall \;i\in \mathcal{S}^{(q)} {\rm and}\ q\in \mathcal{Q}\right]\nonumber\\
& \ge 1-\Prob\left(\sum_{q=1}^{Q}\|\bfxi^{(q)}\|^2>
\frac{\bar\mu}{\alpha_2}\left(\sum_{q=1}^{Q}\trace[\bhX^{(q)}]\right)\right)\nonumber\\
&\quad\quad -\sum_{q=1}^{Q}\sum_{i\in \mathcal{S}^{(q)}} \Prob\left( \bfxi^{(q)H}\bH_i\bfxi^{(q)}< \frac{Q-P_i+1}{\alpha_2} \trace [\bH_i\hat\bX^{(q)}]\right)\nonumber\\
&= 1-\Prob\left(\sum_{q=1}^{Q}\|\bfxi^{(q)}\|^2>
\frac{\bar\mu}{\alpha_2}\left(\sum_{q=1}^{Q}\trace[\bhX^{(q)}]\right)\right)\nonumber\\
&\quad\quad -\sum_{i=1}^{M}\sum_{q\in \mathcal{P}_i} \Prob\left( \bfxi^{(q)H}\bH_i\bfxi^{(q)}< \frac{Q-P_i+1}{\alpha_2} \trace [\bH_i\hat\bX^{(q)}]\right)\nonumber\\
&\ge 1-\frac{\alpha_2}{\bar\mu}-\sum_{i=1}^{M}\sum_{q\in \mathcal{P}_i} \Prob\left( \bfxi^{(q)H}\bH_i\bfxi^{(q)}< \frac{Q-P_i+1}{\alpha_2} \trace [\bH_i\hat\bX^{(q)}]\right),
\end{align}
where the last inequality is from the Markov's inequality.

\noindent \textbf{ The Real Case.} When $\mathbb{F}=\mathbb{R}$, by \cite[Lemma 1]{Luo07approximationbounds}, we have that
\begin{align}
&\quad\Prob\left(v^{\rm
P2} \le \bar\mu\  v^{\rm
SDP2} \right)\ge 1-\frac{\alpha_2}{\bar\mu}-\sum_{i=1}^{M}P_i\cdot \delta_i(\alpha_2),
\end{align}
with
$$\delta_i(\alpha_2)=\max \left\{\sqrt{\frac{Q-P_i+1}{\alpha_2}}, \frac{2(r_i-1)}{\pi-2}\cdot \frac{Q-P_i+1}{\alpha_2}\right\},$$
and $r_i=\min\{\rank(\bH_i), \rank(\hat\bX^{(q)})\}$. Since $\bH_i=\bh_i\bh_i^H$, we have $\bar{r}_i=1$ for all $i=1,\cdots, M$ and thus
$\delta_i(\alpha_2)=\sqrt{\frac{Q-P_i+1}{\alpha_2}}$. As a result, we have
\begin{align}
&\quad\Prob\left(v^{\rm
P2} \le \bar\mu\  v^{\rm
SDP2} \right)\ge 1-\frac{\alpha_2}{\bar\mu}-\sum_{i=1}^{M}P_i\cdot\sqrt{\frac{Q-P_i+1}{\alpha_2}}.
\end{align}
By setting $\alpha_2=\frac{9(\sum_{i=1}^{M}P_i\sqrt{Q-P_i+1})^2}{\pi}$, $\bar\mu=3\alpha_2$,
and $\sigma_2=\frac{2-\sqrt{\pi}}{3}$,
we have $$\Prob\left(v^{\rm
P2} \le \bar\mu\  v^{\rm
SDP2} \right)\ge \sigma_2>0.$$

\noindent \textbf{ The Complex Case.} When $\mathbb{F}=\mathbb{C}$, by \cite[Lemma 3]{Luo07approximationbounds}, we have that
\begin{align}
&\quad\Prob\left(v^{\rm
P2} \le \bar\mu\  v^{\rm
SDP2} \right)\ge 1-\frac{\alpha_2}{\bar\mu}-\sum_{i=1}^{M}P_i\cdot \bar\delta_i(\alpha_2),
\end{align}
with
$$\bar\delta_i(\alpha_2)=\max \left\{\frac{4}{3}\cdot\frac{Q-P_i+1}{\alpha_2}, 16(r_i-1)^2\cdot \frac{(Q-P_i+1)^2}{\alpha_2^2}\right\},$$
and $r_i=\min\{\rank(\bH_i), \rank(\hat\bX^{(q)})\}$. Since $\bH_i=\bh_i\bh_i^H$, we have $\bar{r}_i=1$ for all $i=1,\cdots, M$ and thus
$\bar\delta_i(\alpha_2)=\frac{4}{3}\cdot\frac{Q-P_i+1}{\alpha_2}$. As a result, we have
\begin{align}
&\quad\Prob\left(v^{\rm
P2} \le \bar\mu\  v^{\rm
SDP2} \right)\ge 1-\frac{\alpha_2}{\bar\mu}-\frac{4}{3}\sum_{i=1}^{M}P_i\cdot \frac{Q-P_i+1}{\alpha_2}.
\end{align}
By setting $\alpha_2=\frac{4\sum_{i=1}^{M}P_i(Q-P_i+1)}{\sqrt{\pi}}$, $\bar\mu=3\alpha_2$,
and $\sigma_2=\frac{2-\sqrt{\pi}}{3}$,
we have $$\Prob\left(v^{\rm
P2} \le \bar\mu\  v^{\rm
SDP2} \right)\ge \sigma_2>0.$$
This completes the proof.
\end{proof}

\begin{theorem} \label{thm:2.5}
For \eqref{problemQReformulate} and its SDP relaxation \eqref{problemSDR2}, we have
\begin{align}
v^{\rm P2}\le\bar\mu v^{\rm SDP2},
\end{align}
with $\bar\mu=\frac{27(\sum_{i=1}^{M}P_i\sqrt{Q-P_i+1})^2}{\pi}$ when  $\mathbb{F}=\mathbb{R}$ and $\bar\mu=\frac{12\sum_{i=1}^{M}P_i(Q-P_i+1)}{\sqrt{\pi}}$ when  $\mathbb{F}=\mathbb{C}$.
\end{theorem}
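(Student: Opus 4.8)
The plan is to derandomize the probabilistic guarantee of Theorem~\ref{claimLowerBoundProbability} in exactly the way Theorem~\ref{thm:2.2} was obtained from Theorem~\ref{thm:2.1}. Inspecting the proof of Theorem~\ref{claimLowerBoundProbability}, with the field-dependent choice of $\alpha_2$ made there and $\bar\mu = 3\alpha_2$, what is actually shown is that the event
\[
E := \left\{\, \sum_{q=1}^{Q}\|\bfxi^{(q)}\|^2 \le 3\sum_{q=1}^{Q}\trace[\bhX^{(q)}] \ \text{ and }\ \frac{1}{\bfxi^{(q)H}\bH_i\bfxi^{(q)}}\le \alpha_2 \ \ \forall\, i\in\mathcal{S}^{(q)},\ q\in\mathcal{Q} \,\right\}
\]
has probability at least $\sigma_2 = \tfrac{2-\sqrt\pi}{3} > 0$, a constant independent of the problem data. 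In particular $E$ is nonempty, so I would fix one realization $\{\bfxi^{(q)}\}_{q=1}^{Q}\in E$ and let $\{\tilde\bx^{(q)},\tilde\bfalpha^{(q)}\}$ be the output of the procedure in Table~\ref{tableRandomize_G} on this realization (with the convention $\tilde\bx^{(q)}=\bzero$ whenever $\mathcal{S}^{(q)}=\emptyset$).

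Next I would verify that $\{\tilde\bx^{(q)},\tilde\bfalpha^{(q)}\}$ is feasible for \eqref{problemQReformulate}. The cardinality constraints hold because $|\mathcal{P}_i|=P_i$, hence $\sum_{q}\tilde\alpha_i^{(q)} = P_i \ge P_i$; the binary constraints hold by construction. For the quadratic constraints, if $\tilde\alpha_i^{(q)}=1$ then $i\in\mathcal{S}^{(q)}$, and by the definition of $\tilde t^{(q)}$,
\[
\tilde\bx^{(q)H}\bH_i\tilde\bx^{(q)} = (\tilde t^{(q)})^2\,\bfxi^{(q)H}\bH_i\bfxi^{(q)} \ge \frac{\bfxi^{(q)H}\bH_i\bfxi^{(q)}}{\bfxi^{(q)H}\bH_i\bfxi^{(q)}} = 1 = \tilde\alpha_i^{(q)};
\]
if $\tilde\alpha_i^{(q)}=0$ the constraint reads $\tilde\bx^{(q)H}\bH_i\tilde\bx^{(q)}\ge 0$, which holds since $\bH_i=\bh_i\bh_i^H\succeq 0$. (On $E$ the quantities $\bfxi^{(q)H}\bH_i\bfxi^{(q)}$ for $i\in\mathcal{S}^{(q)}$ are strictly positive, so $\tilde t^{(q)}$ is well defined.)

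Then I would bound the objective along this realization. Since $(\tilde t^{(q)})^2 = \max_{i\in\mathcal{S}^{(q)}} 1/(\bfxi^{(q)H}\bH_i\bfxi^{(q)}) \le \alpha_2$ on $E$, and $\sum_q\|\bfxi^{(q)}\|^2 \le 3\sum_q\trace[\bhX^{(q)}]$ on $E$,
\[
v^{\rm P2} \le \sum_{q=1}^{Q}\|\tilde\bx^{(q)}\|^2 = \sum_{q=1}^{Q}(\tilde t^{(q)})^2\|\bfxi^{(q)}\|^2 \le \alpha_2\sum_{q=1}^{Q}\|\bfxi^{(q)}\|^2 \le 3\alpha_2\sum_{q=1}^{Q}\trace[\bhX^{(q)}] = \bar\mu\, v^{\rm SDP2},
\]
where the first inequality uses feasibility of $\{\tilde\bx^{(q)},\tilde\bfalpha^{(q)}\}$ and the last equality uses $\bar\mu=3\alpha_2$ together with $v^{\rm SDP2}=\sum_q\trace[\bhX^{(q)}]$ and the value of $\alpha_2$ from Theorem~\ref{claimLowerBoundProbability} for the corresponding field. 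This yields the stated bound in both the real and complex cases.

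I do not expect a genuine obstacle here; the whole argument is a bookkeeping derandomization. The one point that needs care is simply confirming that the probabilistic estimate in Theorem~\ref{claimLowerBoundProbability} already covers \emph{every} pair $(i,q)$ with $i\in\mathcal{S}^{(q)}$ — which it does, via the inclusion $\mathcal{S}^{(q)}\subseteq\{i: q\in\mathcal{P}_i\}$ and Lemma~\ref{claimXLowerBound} — so that the event $E$ written above is exactly the event shown to have probability at least $\sigma_2$, and the mere nonemptiness of $E$ is all the deterministic conclusion requires. (As in the footnote to Theorem~\ref{thm:2.2}, since $\sigma_2$ is an absolute constant, such a realization is produced with overwhelming probability after a modest number of independent trials.)
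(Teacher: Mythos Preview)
Your proposal is correct and follows exactly the approach the paper intends: the paper states that ``the proof of Theorem~\ref{thm:2.5} is similar to that of Theorem~\ref{thm:2.2}, and we omit it,'' and your derandomization of Theorem~\ref{claimLowerBoundProbability} via a realization in the positive-probability event $E$ is precisely that argument. One tiny remark: strictly speaking $|\mathcal{P}_i|\ge P_i$ (ties at $\hat\alpha_i^{[P_i]}$ could enlarge it), but this only helps feasibility and is consistent with how the paper treats it in the proof of Theorem~\ref{claimLowerBoundProbability}.
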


The proof of Theorem \ref{thm:2.5} is similar to that of Theorem \ref{thm:2.2}, and we omit it.

It is important to mention here that, $P_i=1$ for all $i$ and $Q=1$ in the real case, we have
\[\bar\mu=\frac{27M^2}{\pi},\]
which corresponding to Luo et al.'s result \cite{Luo07approximationbounds}.
Moreover, when we consider case that $Q=2$ and $P_i=1$ for all
$i$, the proposed algorithm and the approximation analysis reduces to the one that we have discussed
in Section \ref{secTwoSlots}. Furthermore, if we set $M=1$, $P_1=q$ and $Q=m$ in the real case, we have
\[\bar\mu=\frac{27q^2(m-q+1)}{\pi},\]
which is exactly the results that we have obtained for the minimization model with $\epsilon=0$ as in \cite{Xu2013}.

 \section{Numerical Experiments}

 \begin{figure}[t]
\caption{Upper bound on $v^{\min}_{{\rm QP}}/v^{\min}_{{\rm SDP}}$
for $M=5$, $N=4$, $300$ realizations of real Gaussian i.i.d.
$\bh_i$ for $i=1,\cdots, M$ in the real case.} \label{fig_real1}
\includegraphics[width=\textwidth]{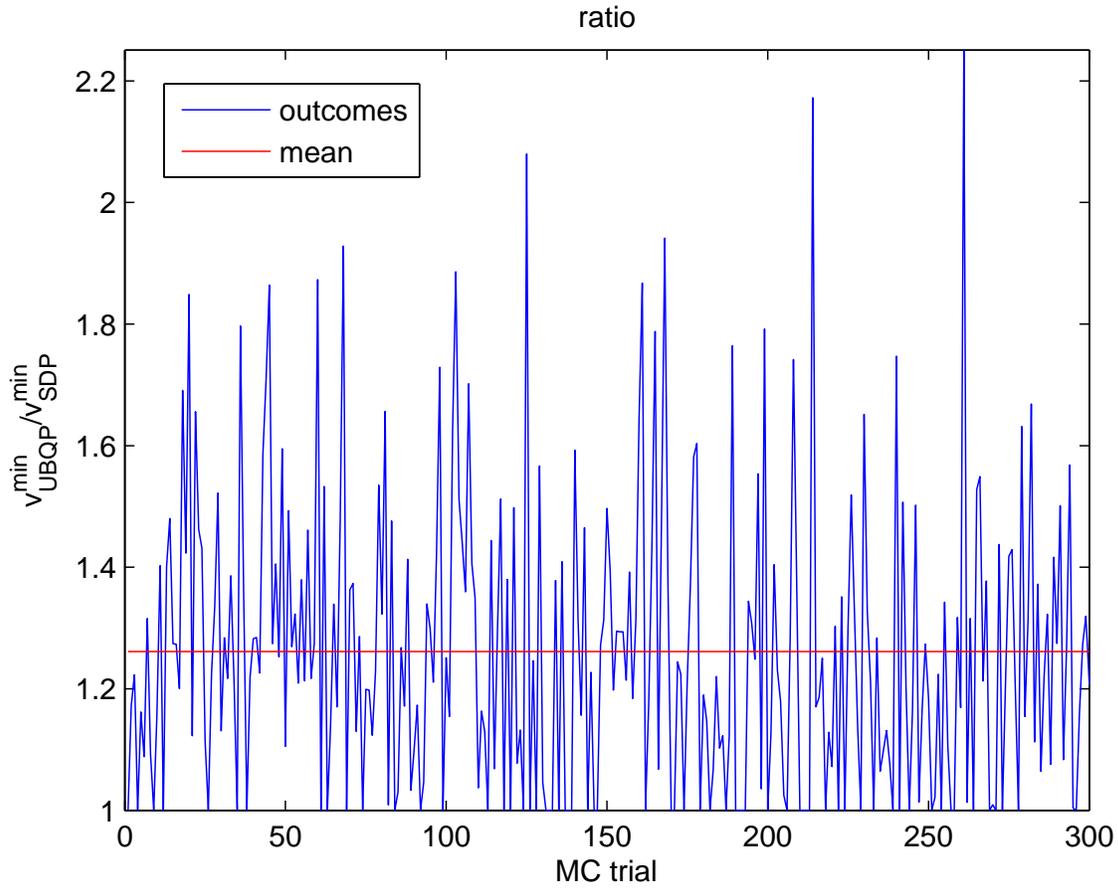}
\end{figure}
\begin{figure}[t]
\caption{Histogram of the outcomes in Figure \ref{fig_real1}.}
\label{fig_real2}
\includegraphics[width=\textwidth]{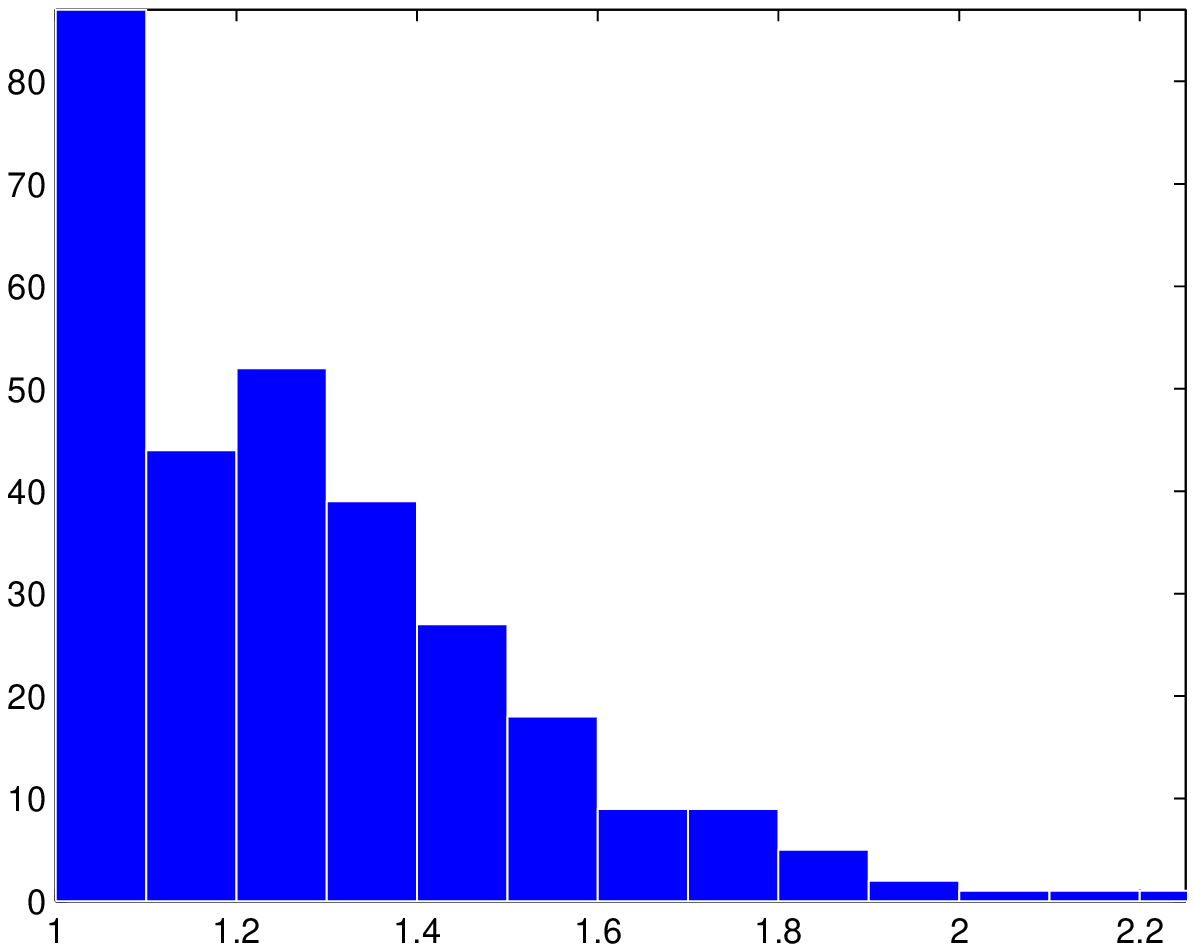}
\end{figure}
\begin{table}
\caption{Mean and standard deviation of the approximation ratio over
$300$ independent realizations of real Gaussian i.i.d. $\bh_i$
$(i=1,\cdots, M)$, when $\mathbb{F}=\mathbb{R}$.}
\begin{center} \footnotesize
\begin{tabular}{|l|l|cccc|} \hline
\multirow{2}{*}{$M$}  & \multirow{2}{*}{$N$} & \multirow{2}{*}{min} &\multirow{2}{*}{max} & \multirow{2}{*}{mean} & \multirow{2}{*}{Std}  \\
&&&&&\\\hline
\multirow{3}{*}{$M=5$} &  $N=4$ & 1.0000 & 2.2508& 1.2612& 0.0590 \\
& $N=6$ & 1.0000 & 4.1582 & 1.2938& 0.1467 \\
& $N=8$ & 1.0000 & 2.4780 & 1.3045& 0.0985\\ \hline
\multirow{3}{*}{$M=10$} &  $N=4$ & 1.0000 & 5.5210& 1.7504& 0.4253 \\
& $N=6$ & 1.0000 & 5.3879 & 1.7640& 0.3313\\
& $N=8$ & 1.0000  & 6.9469 & 2.1074& 0.8202\\ \hline
\multirow{3}{*}{$M=15$} &  $N=4$ & 1.0000 & 9.2963& 2.2168& 1.0545 \\
& $N=6$ & 1.0000& 6.7141& 2.6144& 1.1682\\
& $N=8$ & 1.3109& 6.5277& 2.9619& 1.1806\\ \hline
\end{tabular}
\end{center}
\label{min_real}
\end{table}
\begin{table}
\caption{Mean and standard deviation of upper bound ratio over $300$ independent realizations of real Gaussian i.i.d. $\bh_i$ $(i=1,\cdots, M)$, when $\mathbb{F}=\mathbb{C}$.}
\begin{center} \footnotesize
\begin{tabular}{|l|l|cccc|} \hline
\multirow{2}{*}{$M$}  & \multirow{2}{*}{$N$} & \multirow{2}{*}{min} &\multirow{2}{*}{max} & \multirow{2}{*}{mean} & \multirow{2}{*}{Std}  \\
&&&&&\\\hline
\multirow{3}{*}{$M=5$} &  $N=4$ & 1.0000 & 1.8518 & 1.1336 & 0.0445 \\
& $N=6$ & 1.0000 & 1.9089& 1.0870 & 0.0341 \\
& $N=8$ & 1.0000 & 1.9620 & 1.0915 & 0.0400\\ \hline
\multirow{3}{*}{$M=10$} &  $N=4$ & 1.0000 & 2.2457 & 1.5281& 0.0654 \\
& $N=6$ & 1.0000  & 2.2511& 1.5912& 0.0781\\
& $N=8$ & 1.0000 & 2.3561 &1.5885 & 0.0828\\ \hline
\multirow{3}{*}{$M=15$} &  $N=4$ & 1.0097 & 2.8260 & 1.7460 & 0.0699 \\
& $N=6$ &  1.0001 & 3.0243 & 1.8821 & 0.1064\\
& $N=8$ & 1.0741 & 3.4917 & 1.9442 & 0.1193\\ \hline
\end{tabular}
\end{center}
\label{min_complex}
\end{table}
\begin{figure}[t]
\caption{Upper bound on $v^{\min}_{{\rm QP}}/v^{\min}_{{\rm SDP}}$
for $M=5$, $N=4$, $300$ realizations of real Gaussian i.i.d.
$\bh_i$ for $i=1,\cdots, M$ in the complex case.}
\label{fig_complex1}
\includegraphics[width=\textwidth]{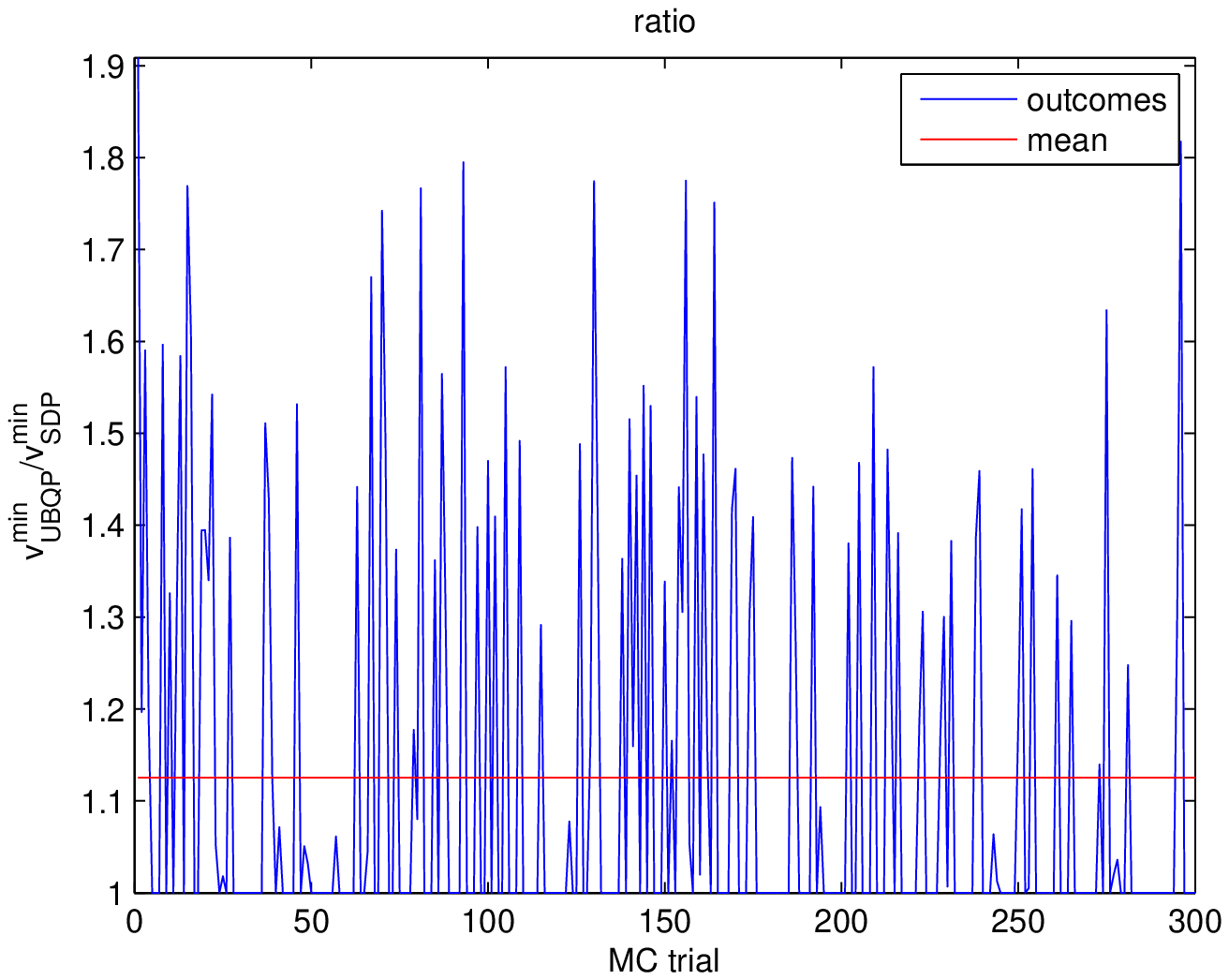}
\end{figure}
\begin{figure}[t]
\caption{Histogram of the outcomes in Figure \ref{fig_complex1}.}
\label{fig_complex2}
\includegraphics[width=\textwidth]{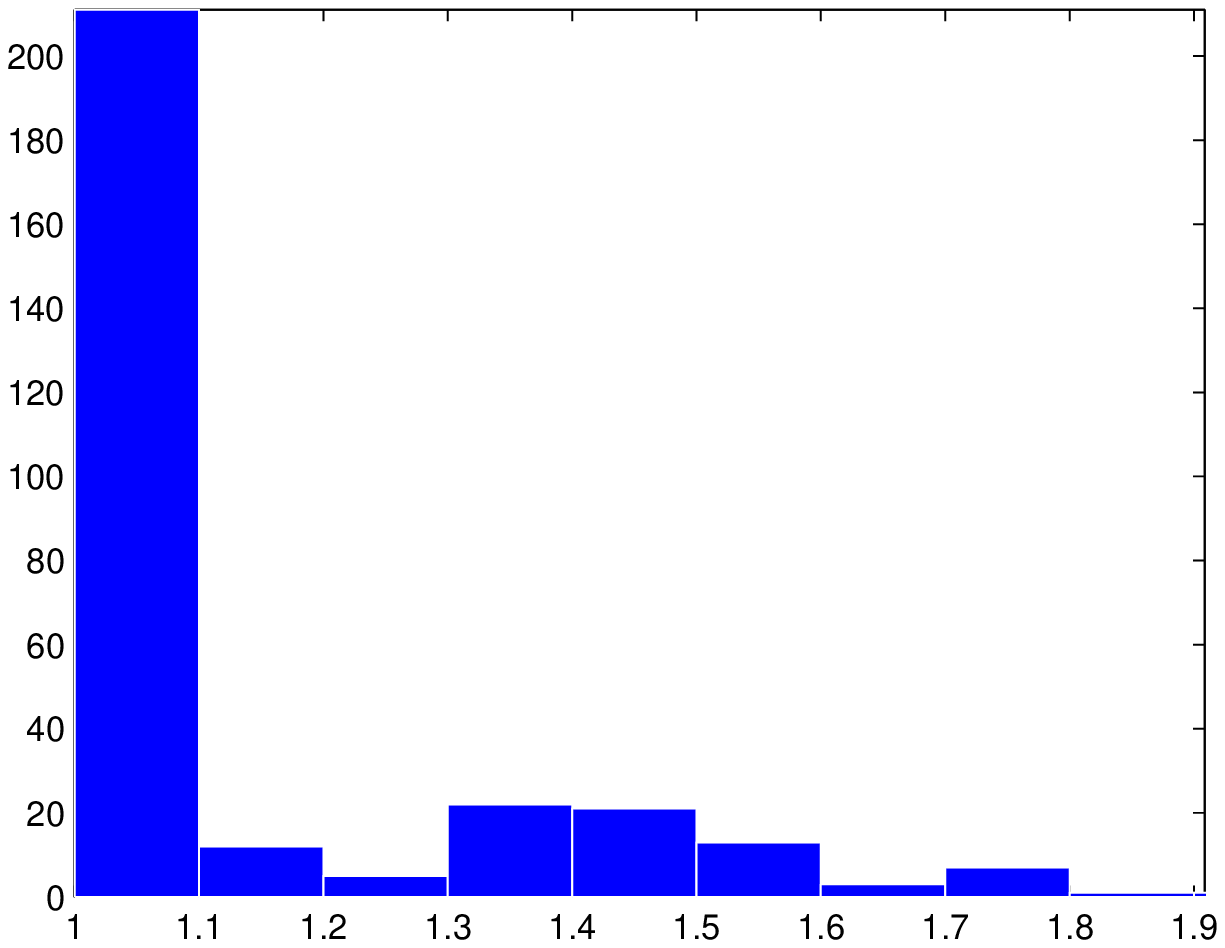}
\end{figure}

In this section we perform numerical study for the proposed algorithms. Throughout this section, we generate
the data matrix $\bH_i$ by using $\bH_i=\bh_i\bh_i^H$ $(i=1,\cdots, M)$, with randomly generated vectors $\bh_i$. The SDP relaxation problems are all solved by CVX \cite{CVX2011}, and the optimal objective value for the SDPs are denoted by $v^{\min}_{{\rm SDP}}$.

We test the proposed procedure listed
in Table \ref{tableRandomize} for \eqref{problem} with different choices of $M$ and
$N$. The Step S3 and Step S4 are repeated by $T=1000$ independent
trials, and the solutions generated by $k$th trial are denoted by $(\bar\bx^{(1)})^k$ and
$(\bar\bx^{(2)})^k$. Let
$$v^{\min}_{{\rm UBQP}}:=\min_{k=1,\cdots, T}\|(\bar\bx^{(1)})^k\|^2+\|(\bar\bx^{(2)})^k\|^2.$$
It is clearly that  $v^{\min}_{{\rm UBQP}}\geq v^{{\rm P1}}$, as a result, $v^{\min}_{{\rm
UBQP}}/v^{\min}_{{\rm SDP}}$ is an upper bound of the
true approximation ratio (which is difficult to obtain in polynomial time).

Table \ref{min_real} shows the average ratio (mean) of
$v^{\min}_{{\rm UBQP}}/v^{\min}_{{\rm SDP}}$ over 300 independent
realizations of i.i.d. real-valued Gaussian $\bh_i$, $(i=1,\cdots,
M)$ for several combinations of $M$ and $N$. The minimum value (min), the  maximum value
(max), the average value (mean) and the standard deviation (Std) of $v^{\min}_{{\rm
UBQP}}/v^{\min}_{{\rm SDP}}$ over 300 independent realizations are
also shown in Table \ref{min_real}. Table \ref{min_complex} shows
the corresponding minimun value, maximum value, average value, and the standard
deviation of $v^{\min}_{{\rm UBQP}}/v^{\min}_{{\rm SDP}}$ for
$\mathbb{F}=\mathbb{C}$. We can see the results in both tables are significantly better than
what is predicted by our worst-case analysis. In all test examples,
the average values of $v^{\min}_{{\rm UBQP}}/v^{\min}_{{\rm SDP}}$
are lower than $3$ (resp. lower than $2$) when
$\mathbb{F}=\mathbb{R}$ (resp. when $\mathbb{F}=\mathbb{C}$).
Moreover, the minimum value of $v^{\min}_{{\rm UBQP}}/v^{\min}_{{\rm SDP}}$ are exactly equal to $1$
in most cases in both real and complex cases, which means the optimal solutions are obtained by our algorithm for some cases.

Figure \ref{fig_real1} plots $v^{\min}_{{\rm UBQP}}/v^{\min}_{{\rm SDP}}$
for 300 independent realizations of i.i.d. real valued Gaussian
$\bh_i$ ($i=1,\cdots, M$) for $M=5$ and $N=4$. Figure
\ref{fig_real2} shows the corresponding histogram. Figure
\ref{fig_complex1} and Figure \ref{fig_complex2} show the
corresponding results for i.i.d complex-valued circular Gaussian
$\bh_i$ ($i=1,\cdots,M$). Both the mean and the maximum of the upper
bound $v^{\min}_{{\rm UBQP}}/v^{\min}_{{\rm SDP}}$ are lower in the
complex case.

Moreover, our numerical results also corroborates well with our
theoretic analysis. First, the upper bound of the approximation
ratio is independent of the dimension of $\bw$: the results vary
only slightly for $N=4$, $N=6$ and $N=8$ in both real and complex case.
Second, from Table \ref{min_real} and Table \ref{min_complex}, it can be shown that for fixed
$N$, the maximum value and the average value of $v^{\min}_{{\rm UBQP}}/v^{\min}_{{\rm
SDP}}$ over 300 independent trials grow as $M$ increases in all
test examples except the case that $M=10$ and $N=8$ in Table \ref{min_real}. It corresponds to the result
in Theorem \ref{thm:2.1}.

 \section{Conclusion and Discussion}
 In this paper, two classes of nonconvex quadratic optimization problems with mixed binary and continuous variables are considered, both of which are motivated by important applications in wireless networks.
New SDP relaxation techniques with novel randomization algorithms are provided for these problems. It is shown that these efficient techniques can provide high quality approximate solutions. Our theoretic analysis provides useful insights on the effectiveness of the new SDP relaxation techniques for these two classes of MBQCQP problems. For both problems, finite and data independent approximation ratios are guaranteed. It should be pointed out that our worst-case analysis of SDP relaxation performance
is based on certain special structure of the discrete and continuous variables. Using a counter example, we have shown in \cite{Xu2013} that SDP relaxation techniques and the corresponding analysis cannot extend directly to general MBQCQP problems.

\ACKNOWLEDGMENT{This research is supported by
the Chinese NSF under the grant 11101261 and the First-class Discipline of Universities in Shanghai.}





\end{document}